\documentclass[10pt]{amsart}
\usepackage{graphicx}                                    
\usepackage{amssymb}
\usepackage{enumitem}
\usepackage{color}
\newtheorem{theorem}{Theorem}[section] 
\newtheorem{lemma}[theorem]{Lemma}
\newtheorem{conjecture*}[theorem]{Conjecture}
\newtheorem{proposition}[theorem]{Proposition}

\newtheoremstyle{notauto}{}{}{\itshape}{}{\bfseries}{.}{0.5em}{\thmnote{#3}}
\theoremstyle{notauto}

\theoremstyle{definition}
\newtheorem{definition}[theorem]{Definition}
\newtheorem{example}[theorem]{Example}
\theoremstyle{remark}
\newtheorem{remark}[theorem]{Remark}

\newcommand{\Qdp}{\text{Qd}(p)}

\newcommand{\Qd}{\text{Qd}}

\newcommand{\bF}{{\mathbb F}}
\newcommand{\bZ}{{\mathbb Z}}

\newcommand{\cA}{{\mathcal A}}

\newcommand{\res}{\text{Res}}
\newcommand{\rk}{\text{rk}}
\newcommand{\ind}{\text{Ind}}

\begin{document}
\baselineskip13pt

\title[Finite groups of rank two which do not  involve $\Qdp$]{Finite groups of rank two which do \\ not involve $\Qdp$}

\author{Muhammet Yas{\. i}r K{\i}zmaz}
\author{Erg{\" u}n Yal{\c c}{\i}n}
\address{Department of Mathematics, Bilkent University, 06800 
Bilkent, Ankara, Turkey}

\email{yasirkizmaz@bilkent.edu.tr, yalcine@fen.bilkent.edu.tr}

\begin{abstract}  Let $p>3$ be a prime. We show that if $G$ is a finite group with $p$-rank equal to 2, then 
$G$ involves $\Qdp$ if and only if $G$ $p'$-involves $\Qdp$.  This allows us to use a version of Glauberman's ZJ-theorem to
give a more direct construction of finite group actions on mod-$p$ homotopy spheres. 
We give an example to illustrate that the above conclusion 
does not hold for $p \leq 3$.  
\end{abstract}
	
%\subjclass{ (primary), 57S17 (secondary)}
\keywords{}

\thanks{2010 {\it Mathematics Subject Classification.} Primary:
20Dxx; Secondary: 20E25, 57S17.}
 
%{\it Keywords:} Involves $Qd(p)$, Glauberman's ZJ-theorem, Mod-$p$ homotopy spheres}

\maketitle

\section{Introduction}
	
Throughout the paper all groups are finite, and $p$ denotes a prime number. A $p$-group is said to be of \emph{rank $k$} if the largest possible order of an elementary abelian subgroup of the group is $p^k$. We say that a group $G$ is of \emph{$p$-rank $k$} if a Sylow $p$-subgroup of $G$ is of rank $k$. We denote the $p$-rank of $G$ by $\rk_p(G)$.

The group $\Qdp$ is defined to be the semidirect product $$\Qdp:=(\bZ/p \times \bZ / p )\rtimes SL(2,p)$$ where the action of the group $SL(2,p)$ on $\bZ /p \times \bZ /p$ is the usual action of $2\times 2$ matrices on a two-dimensional vector space. A group $G$ is said to \emph{involve $\Qdp$}  if there exist subgroups $K \lhd H \leq G$ such that $H/K \cong \Qdp$. We say $G$ \emph{$p'$-involves $\Qdp$} if there exist  $K \lhd H \leq G$ such that $K$ has order coprime to $p$ and $H/K \cong \Qdp$. If a group $p'$-involves $\Qdp$, then obviously it involves $\Qdp$, but the converse does not hold in general. We show that for finite groups with $p$-rank equal to $2$ these two conditions are equivalent when $p>3$.  
  
\begin{theorem}\label{thm:intro1}
Let $G$ be a finite group with $rk_p(G)=2$, where $p>3$ is a prime. Then $G$ involves $\Qdp$ if and only if $G$ $p'$-involves $\Qdp$.	
\end{theorem}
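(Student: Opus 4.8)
The forward implication---that $p'$-involvement implies involvement---is immediate from the definitions, so the plan is to treat the converse. Suppose $G$ involves $\Qdp$, witnessed by $K \lhd H \le G$ with $H/K \cong \Qdp$. Since a Sylow $p$-subgroup of $\Qdp$ is the extraspecial group $p^{1+2}$ of exponent $p$, we have $\rk_p(\Qdp) = 2$, and from $\rk_p(H/K) \le \rk_p(H) \le \rk_p(G) = 2$ we get $\rk_p(H) = 2$. The key observation is that the whole problem collapses to one assertion: \emph{the Sylow $p$-subgroup of $K$ is trivial}. Indeed, once $K$ is a $p'$-group the pair $(H,K)$ already witnesses that $G$ $p'$-involves $\Qdp$; in particular no induction on $|G|$ is required, and everything reduces to a statement about the $p$-local structure of $H$.

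To control the $p$-part of $K$, I would pass to the socle. Let $V/K$ be the preimage in $H/K$ of the unique minimal normal subgroup $(\bZ/p)^2$ of $\Qdp$. Then $V \lhd H$, the module $V/K \cong (\bZ/p)^2$ is the natural $SL(2,p)$-module, and $H/V \cong SL(2,p)$. Fix a Sylow $p$-subgroup $S$ of $H$ and put $P := S \cap V$. Since $V \lhd H$ we have $P \lhd S$ and $P$ is a Sylow $p$-subgroup of $V$; as $V/K$ is a $p$-group, $P$ surjects onto $V/K$ with kernel $P \cap K = S \cap K$, a Sylow $p$-subgroup of $K$. Moreover $S/P \cong SV/V$ is a Sylow $p$-subgroup of $SL(2,p)$, hence cyclic of order $p$, and a generator acts on $P/(P\cap K) \cong V/K$ as a nontrivial transvection (the natural module is faithful). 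The first essential input is that for $p$ odd every subgroup of a rank-$2$ $p$-group is generated by at most two elements; applied to $P$ this gives $d(P) = 2$, whence $\Phi(P) = P \cap K$ and $P/\Phi(P) \cong V/K$. Thus the entire configuration now sits inside the single rank-$2$ group $S$: a normal subgroup $P$ of index $p$ with $P/\Phi(P)\cong(\bZ/p)^2$ on which $S/P$ acts as a nontrivial unipotent.

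The heart of the argument is then the following purely $p$-local lemma, which is exactly where $p>3$ is used: \emph{if $p>3$, $S$ is a $p$-group of rank $2$, $P \lhd S$ has index $p$ with $P/\Phi(P)\cong(\bZ/p)^2$, and $S$ acts nontrivially on $P/\Phi(P)$, then $P$ is elementary abelian} (so $S\cong p^{1+2}$). I would prove this by showing that $\Phi(P)\neq 1$ forces an elementary abelian subgroup of rank $3$ in $S$, contradicting $\rk_p(S)=2$: writing $u$ for a generator of $S/P$, one checks that a suitable power of $u$ has order $p$ and centralizes a rank-$2$ elementary abelian subgroup of $P$ coming from $\Omega_1(Z(P))$ and the $u$-fixed line produced by the single Jordan block, so that these elements together generate a copy of $(\bZ/p)^3$. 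Alternatively one can quote Blackburn's classification of rank-$2$ $p$-groups for $p\ge 5$ (metacyclic, maximal class, or $p^{1+2}$) and eliminate every possibility except $p^{1+2}$ by inspection. Granting the lemma, $\Phi(P)=P\cap K=1$, so $K$ has trivial Sylow $p$-subgroup and is a $p'$-group, which completes the converse.

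I expect this lemma to be the main obstacle. Producing the rank-$3$ subgroup---equivalently, showing that $p^{1+2}$ is the only rank-$2$ $p$-group carrying a nontrivial unipotent of this shape---is the delicate step and is precisely where $p>3$ enters (both in the ``rank $2$ implies $2$-generated sections'' reduction and in the elimination of the exceptional $p$-groups). This is consistent with the failure of the theorem for $p\le 3$: there one finds rank-$2$ $p$-groups with $\Phi(P)\neq 1$ of the above shape, and these yield involvements of $\Qdp$ that are not $p'$-involvements.
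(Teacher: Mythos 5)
Your reduction to the assertion ``the Sylow $p$-subgroup of $K$ is trivial'' heads in roughly the same direction as the paper, but the two facts you lean on are both false, and the second one is fatal. First, it is not true that every subgroup of a rank-$2$ $p$-group is $2$-generated for odd $p$: the group $C(p,4)=\langle a,b,c \mid a^p=b^p=c^{p^2}=1, [a,b]=c^p, [a,c]=[b,c]=1\rangle$ has rank $2$ (its $\Omega_1$ is extraspecial of order $p^3$) but $C(p,4)/\Phi(C(p,4))\cong (\bZ/p)^3$, so $d=3$. Hence your identification $\Phi(P)=P\cap K$ is unjustified. Second, and more seriously, your key ``purely $p$-local lemma'' is false. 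Take $S=G(p,r;\epsilon)=\langle a,b,c \mid a^p=b^p=c^{p^{r-2}}=[b,c]=1, [a,b^{-1}]=c^{\epsilon p^{r-3}}, [a,c]=b\rangle$ with $r\geq 4$, which has rank $2$ for $p\geq 5$, and let $P=\langle b,c\rangle\cong C_p\times C_{p^{r-2}}$. Then $P\lhd S$ has index $p$, $\Phi(P)=\langle c^p\rangle$, $P/\Phi(P)\cong(\bZ/p)^2$, and since $[a,c]=b$ while $[a,b^{-1}]\in\langle c^p\rangle$, the generator $a$ of $S/P$ acts on $P/\Phi(P)$ as a nontrivial transvection --- yet $P$ is not elementary abelian. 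So a single unipotent element acting on a rank-$2$ Sylow subgroup cannot force $\Phi(P)=1$; the obstruction is genuinely global. (Your fallback, ``metacyclic, maximal class, or $p^{1+2}$,'' is also not the correct classification: for $p\geq 5$ the rank-$2$ $p$-groups are the metacyclic groups, $C(p,r)$, and $G(p,r;\epsilon)$, and the last two families are exactly where the work lies.)

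The paper's proof shows why more than $p$-local input is needed. It first runs a minimal-counterexample argument (Frattini argument plus Schur--Zassenhaus) to reduce to the case of a normal $p$-subgroup $U\lhd G$ with $G/U\cong\Qdp$ --- a reduction you declare unnecessary but which is what makes $U$ normal and central in all of $G$. Then, in the problematic case $S=G(p,r;\epsilon)$, it uses the \emph{full} $SL(2,p)$ on top: transitivity of $SL(2,p)$ on the maximal subgroups of the preimage $A$ of $(\bZ/p)^2$ rules out $A\cong C_p\times C_{p^{r-2}}$ (its maximal subgroups over $\Phi(A)$ are pairwise non-isomorphic), and in the remaining case $A\cong C(p,r-1)$ the triviality of the Schur multiplier of $SL_2(p)$ splits off a complement $L\cong SL_2(p)$, after which Winter's theorem on automorphisms of extraspecial groups produces an element $x\in L$ of order $p$ with $\langle x,b,z\rangle\cong(\bZ/p)^3$, contradicting rank $2$. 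None of this is visible from the action of one transvection, so your proposed lemma cannot be repaired without reintroducing essentially this global machinery.
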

	
Theorem \ref{thm:intro1} is proved in Section \ref{sect:Proof}. The key step is the case where $G$ involves $\Qdp$ with subgroups $K\lhd H\leq G$ where  $K$ is a $p$-group. This case is handled by using the classification of $p$-groups of rank 2. 	
Theorem \ref{thm:intro1} is no longer true when $p=3$. We illustrate this by constructing a group extension of $\Qd(3)$ by a cyclic group of order $3$ in a way that the extension group does not $3'$-involve $\Qd(3)$ (see Example \ref{ex:p=3}). For $p=2$, there is a similar example (see Example \ref{ex:p=2}). 
 
The condition that $G$ does not involve $\Qdp$ appears in Glauberman's ZJ-theorem  \cite[Thm B]{Glauberman1}. Let $G$ be a finite group, and $S$ be a Sylow $p$-subgroup of $G$. The Thompson subgroup $J(S)$ of $S$ is defined to be the subgroup generated by all abelian $p$-subgroups of $S$ with maximal 
order. The center of $J(S)$ is denoted by $ZJ(S)$ and its normalizer in $G$ by $N_G(ZJ(S))$. Given a subgroup $H$ of $G$ containing $S$, we say \emph{$H$ controls $G$-fusion in $S$} if for every $P\leq S$ and $g \in G$ such that $gPg^{-1}\leq S$, there exist $h\in H$ and $c\in C_G(P)$ such that $g=hc$. Glauberman's $ZJ$-theorem \cite[Thm B]{Glauberman1} states that if $G$ is a finite group that does not involve $\Qdp$, and $S$ is a Sylow $p$-subgroup of $G$, where $p$ is odd, then $N_G(ZJ(S))$ controls $G$-fusion in $S$. There is a version of Glauberman's ZJ-theorem due to Stellmacher which also works for $p=2$. We find Stellmacher's version of Glauberman's theorem more useful for our purpose even for the $p$ odd case (see Theorem \ref{thm:SZJ}).

The condition that $G$ $p'$-involves $\Qdp$ appears in the construction of finite group actions on products of spheres, 
particularly in the construction of mod-$p$ spherical fibrations over the classifying space $BG$ of a finite group $G$.  
One requires these spherical fibrations to have a $p$-effective Euler class (see Section \ref{sect:Glauberman} for definitions). 
Jackson \cite{Jackson} showed that if $G$ is a finite group of rank 2 that does not $p'$-involve $\Qdp$ for any odd prime $p$, 
then there is a spherical fibration over $BG$ with an effective Euler class.  Jackson proves this theorem using results from 
two papers, one on the homotopy theory of maps between classifying spaces \cite{JacksonQuotient} and the other 
on representations that respect fusion \cite{Jackson}.

The motivation for proving Theorem \ref{thm:intro1} comes from the question of whether or not the constructions of mod-$p$ spherical fibrations are related to Glauberman's ZJ-theorem. Having shown that these two theorems have equivalent conditions at least for $p>3$,  we consider whether Glauberman's ZJ-theorem can be used directly to obtain a mod-$p$ spherical fibration with a $p$-effective Euler class, providing an alternative to Jackson's construction. We prove this for a finite group of arbitrary rank and for any prime $p$, using Stellmacher's version of Glauberman's ZJ-theorem. 

\begin{theorem}\label{thm:intro2}
Let $G$ be a finite group that does not involve $\Qdp$, where $p$ is a prime. Then 
there is a mod-$p$ spherical fibration over $BG$ with a $p$-effective Euler class.	
\end{theorem}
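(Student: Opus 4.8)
The plan is to use control of $p$-fusion to replace $G$ by a $p$-local subgroup that carries a nontrivial normal $p$-subgroup, to manufacture the fibration there out of an ordinary representation, and to transport it back to $BG$. Since $G$ does not involve $\Qdp$, Stellmacher's version of Glauberman's $ZJ$-theorem (Theorem~\ref{thm:SZJ}) shows that for a Sylow $p$-subgroup $S$ the subgroup $H:=N_G(ZJ(S))$ controls $G$-fusion in $S$. Here $S\le H$ and the characteristic subgroup $ZJ(S)$ is normal in $H$, so $H$ has a nontrivial normal $p$-subgroup to work with.

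First I would isolate the transport mechanism. Control of fusion says $\cF_S(H)=\cF_S(G)$ as fusion systems on the common Sylow subgroup $S$, so by the Cartan--Eilenberg stable-element theorem the inclusion $H\hookrightarrow G$ induces an isomorphism $H^*(BG;\bF_p)\xrightarrow{\cong}H^*(BH;\bF_p)$, and therefore a homotopy equivalence of $p$-completions $BH^\wedge_p\xrightarrow{\;\simeq\;}BG^\wedge_p$. Mod-$p$ spherical fibrations are classified by homotopy classes of maps into a $p$-complete target, and $p$-effectiveness of an Euler class is detected by restriction to the elementary abelian $p$-subgroups, which correspond to one another under this equivalence. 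It therefore suffices to build a mod-$p$ spherical fibration over $BH$ with a $p$-effective Euler class and then carry it across $BH^\wedge_p\simeq BG^\wedge_p$.

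For the construction over $H$, let $Z:=\Omega_1(ZJ(S))$, a nontrivial elementary abelian subgroup that is normal in $H$. Fix a nontrivial character $\mu\colon Z\to\mathbb{C}^{\times}$ and let $V:=\ind_Z^H(\mu)$. Since $Z\lhd H$, the restriction $V|_Z$ is a sum $\bigoplus_j\chi_j$ of nontrivial $H$-conjugates of $\mu$; in particular $V^Z=0$, so the mod-$p$ reduction of the Euler class satisfies $e(V)|_Z=\prod_j c_1(\chi_j)$, a product of nonzero linear forms in the polynomial part of $H^*(BZ;\bF_p)$, and hence is nonzero. Now the Thompson-subgroup property enters: every elementary abelian $E\le S$ of maximal rank lies in $J(S)$, hence centralizes $ZJ(S)\ge Z$, so $EZ$ is again elementary abelian and maximality of $\rk E$ forces $Z\le E$. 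Consequently $\res^E_Z(e(V)|_E)=e(V)|_Z\neq0$, so $e(V)|_E\neq0$ for every maximal elementary abelian $E$. Thus the linear sphere bundle $S(V)\to BH$ is a spherical fibration whose Euler class is $p$-effective on all elementary abelian subgroups of maximal rank, and transporting it as above yields one over $BG$.

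The step I expect to be the main obstacle is strengthening ``$p$-effective on the maximal elementary abelians'' to ``$p$-effective on every elementary abelian subgroup''. Once $\rk_p$ exceeds $1$ no linear representation can have this property, so one must leave the linear setting: inducting on $|G|$, an effective class inflated from $B(G/Z)$ --- a group that again does not involve $\Qdp$ --- takes care of the subgroups meeting $Z$ trivially, while the class above takes care of those containing $Z$, and these must be fused into a single class that is simultaneously nonzero on every elementary abelian. Since Euler classes multiply under fibrewise join, a naive product is supported only on the intersection of the two families and dies on the subgroups inside $Z$, so the assembly must be genuinely additive. This assembly is precisely what is obstructed in general by a section isomorphic to $\Qdp$, via the transitivity of $SL(2,p)$ on the lines of its natural module; it is the hypothesis that $G$ does not involve $\Qdp$, propagated through control of fusion at each stage of the induction, that removes the obstruction. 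The remaining technical input is to realize the resulting effective cohomology class as the Euler class of an honest mod-$p$ spherical fibration, which $p$-local obstruction theory provides.
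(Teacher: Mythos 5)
Your first two paragraphs are essentially the paper's proof: Stellmacher's theorem gives a characteristic subgroup whose normalizer $N$ controls fusion, Cartan--Eilenberg gives $BG^\wedge_p\simeq BN^\wedge_p$, and the sphere bundle of $V=\ind_Z^N(\mu)$ for a nontrivial character $\mu$ of $Z$ does the job because $V^Z=0$ and every maximal-rank elementary abelian contains $Z$. Two corrections are needed, though. First, Theorem~\ref{thm:SZJ} does not hand you $N_G(ZJ(S))$; it produces a characteristic subgroup $W(S)$ with $\Omega(Z(S))\leq W(S)\leq \Omega(Z(J_e(S)))$, where $J_e(S)$ is generated by the elementary abelian subgroups of maximal order. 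This distinction matters for your key step: an elementary abelian subgroup $E$ of maximal \emph{rank} need not be an abelian subgroup of maximal \emph{order}, so the claim ``$E\leq J(S)$'' is not justified for the Thompson subgroup $J(S)$ you use. It is justified for $J_e(S)$, and that is exactly why the paper works with Stellmacher's $W(S)\leq \Omega(Z(J_e(S)))$ rather than with $ZJ(S)$. With $Z:=W(S)$ (or $\Omega_1$ of it) your argument that $Z\leq E$ for every maximal-rank $E$, hence $\res^N_E e(V)$ is a product of nonzero linear forms and non-nilpotent, goes through verbatim.

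Second, your final paragraph identifies an obstacle that does not exist: by the paper's definition, a class is $p$-effective when its restriction to every elementary abelian subgroup \emph{of maximal rank} is non-nilpotent; nothing is required on lower-rank elementary abelians. So the ``strengthening'' you describe is not part of the statement, the induction and assembly you sketch are unnecessary, and (as you yourself observe) the stronger property is unattainable for linear spheres anyway. Once you delete that paragraph and replace $ZJ(S)$ by $W(S)$, the proof is complete and coincides with the paper's, modulo two small technical points the paper records: one should replace $V$ by $V\oplus V$ to make $N$ act trivially on the mod-$p$ homology of the fiber, and one should invoke the Bousfield--Kan mod-$R$ fiber lemma to identify the fiber of the $p$-completed Borel fibration with $S(V)^\wedge_p$.
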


Theorem \ref{thm:intro2} is proved in Section \ref{sect:Glauberman}.  In the proof we use Stellmacher's theorem (Theorem \ref{thm:SZJ}) to obtain that there is a homotopy equivalence $BG_p ^\wedge \simeq BN_p ^\wedge$ between $p$-completions of classifying spaces of $G$ and $N=N_G (W(S))$. We then show that over $BN$ the desired spherical fibration can be constructed using the Borel construction $EN\times _N S(V) \to BN$ for a linear sphere $S(V)$.   

Note that Theorem \ref{thm:intro1} and Theorem \ref{thm:intro2} together give a different proof for Jackson's theorem for $p>3$. We believe Theorem \ref{thm:intro2} is interesting in its own right for constructing actions on products of spheres for finite groups with arbitrary rank.

\vskip 5pt

\noindent 
{\bf Acknowledgement:}  Both authors are supported by a T\" ubitak 1001 project (grant no. 116F194).  We thank the referee for a careful reading of the paper and many useful comments on the paper. In particular the proof of Theorem \ref{thm:intro1} is significantly improved by referee's comments.

%--------------------------

\section{The proof of Theorem \ref{thm:intro1}}\label{sect:Proof}

We first consider the case where $G$ involves $\Qdp$ with $U\lhd G$ where  $U$ is a $p$-group.  
		
\begin{lemma}\label{main lemma}
Let $U\lhd G$. Suppose that $U$ is a $p$-subgroup of $G$ such that $G/U\cong \Qdp$.  
If $rk_p(G)=2$ and $p>3$ then $U=1$, that is, $G\cong\Qdp$.
\end{lemma}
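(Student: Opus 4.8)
The plan is to pass to the normal $p$-subgroup lying over the natural module and then invoke the classification of rank $2$ $p$-groups. Let $\pi\colon G\to\Qdp$ be the quotient map with kernel $U$, let $V=\bZ/p\times\bZ/p\lhd\Qdp$ be the natural $SL(2,p)$-module, and set $\tilde V=\pi^{-1}(V)$. Then $\tilde V\lhd G$ is a $p$-group with $\tilde V/U\cong V$ and $G/\tilde V\cong SL(2,p)$, and since $\tilde V\le G$ we have $\rk_p(\tilde V)\le \rk_p(G)=2$. Conjugation makes $SL(2,p)\cong G/\tilde V$ act on every characteristic section of $\tilde V$, and on $\tilde V/U$ this action is the natural module $V$ (here it is essential to work with the subgroup $\tilde V$, not with a quotient, since $p$-rank can jump under quotients). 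The goal becomes to show $\tilde V=V$, equivalently $U=1$.

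Before the case analysis I would assemble the representation-theoretic facts to be used, all valid because $SL(2,p)$ is quasisimple, hence perfect, for $p>3$: the natural module $V$ is irreducible and in particular has \emph{no} $SL(2,p)$-invariant line; every two-dimensional $\bF_p[SL(2,p)]$-module is either trivial or isomorphic to $V$; the exterior square $\Lambda^2 V\cong\det$ is the trivial module; and $V$ does not lift to a free $\bZ/p^2$-module, i.e. the reduction $SL(2,\bZ/p^2)\to SL(2,p)$ does not split (equivalently, the order-$p$ unipotent of $SL(2,p)$ has no order-$p$ lift to $GL(2,\bZ/p^2)$ reducing to a Jordan block). Finally, a fixed unipotent $u\in SL(2,p)$ of order $p$ fixes a line in $V$; inside a Sylow $p$-subgroup $S=\tilde V\langle u\rangle$ of $G$ this fixed line is what will produce large elementary abelian subgroups.

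Next I would apply Blackburn's classification of rank $2$ $p$-groups for $p\ge 5$ to $\tilde V$ and eliminate the isomorphism types. Two principles do most of the work. First, the ``no invariant line'' principle: no characteristic subgroup of $\tilde V$ may map onto a one-dimensional subspace of $V$, which rules out every non-homocyclic abelian type $\bZ/p^a\times\bZ/p^b$ with $a\ne b$ (whose agemo/$\Omega_1$ subgroups single out a line) as well as every nonabelian metacyclic (modular) type, since the latter has a characteristic maximal cyclic subgroup and hence forces an invariant line in $V$. Second, writing $\tilde V'=[\tilde V,\tilde V]\le\Phi(\tilde V)\le U$, the commutator pairing identifies a quotient of $\Lambda^2 V$ with a section of $\tilde V'$; triviality of $\Lambda^2 V$ collapses the nilpotency class and forces $\tilde V'$ to be at most a one-dimensional trivial module. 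After these reductions only two configurations survive: $\tilde V$ homocyclic abelian $(\bZ/p^n)^2$ with the natural action on each layer, and $\tilde V$ extraspecial of order $p^3$ with $U=Z(\tilde V)\cong\bZ/p$ and $\tilde V/U=V$ natural.

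The homocyclic case is finished by the non-lifting fact: $n\ge 2$ would require a splitting of $SL(2,\bZ/p^2)\to SL(2,p)$, so $n=1$ and $\tilde V=V$. The extraspecial (Heisenberg) case is the main obstacle, since here no purely module-theoretic contradiction remains, and it is precisely the configuration that forces the rank hypothesis to be used on the larger group $S=\tilde V\langle u\rangle$ rather than on $\tilde V$ alone. I would resolve it with the unipotent: the fixed line of $u$ in $V$, pulled back together with the central $U$ and $\langle u\rangle$, yields an elementary abelian subgroup of rank $3$ in $S$, contradicting $\rk_p(G)=2$. (This construction degenerates exactly when $U=1$, for then $S$ is the genuine order-$p^3$ Heisenberg group, which has rank $2$.) Combining the cases leaves only $\tilde V=V$, that is $U=1$ and $G\cong\Qdp$. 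The role of Blackburn's classification is to make the verification finite, and the crux is the interplay between the rank bound on $\tilde V$ and the rank bound on the Sylow subgroup $S$ containing the unipotent of $SL(2,p)$.
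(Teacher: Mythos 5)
Your overall strategy runs parallel to the paper's (classification of rank-$2$ $p$-groups, module-theoretic elimination of cases, and a rank-$3$ elementary abelian contradiction in the extraspecial configuration), but there are two genuine gaps. First, the claimed dichotomy ``homocyclic abelian or extraspecial of order $p^3$'' is not exhaustive. The central products $E \ast C_{p^k}$ with $E$ extraspecial of order $p^3$ and exponent $p$ and $k\geq 1$ (the groups $C(p,r)$ with $r\geq 4$ in the Diaz--Ruiz--Viruel classification, taking $U=Z(\tilde V)$ cyclic of order $p^{r-2}$) survive every principle you state: they have rank $2$, $\tilde V/U$ is the natural module, $[\tilde V,\tilde V]$ is a one-dimensional trivial module, and no characteristic subgroup maps onto a line of $V$ since $Z(\tilde V)=U$ and $\Omega_1(\tilde V)$ surjects onto all of $V$. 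This family is precisely the hardest case in the paper (its case (iii), where the relevant normal subgroup is a central product $E\ast U$), so it cannot be absorbed into your ``after these reductions only two configurations survive.''

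Second, and more seriously, the final step in the extraspecial (and central product) case is asserted rather than proved. You need an element $g\in G$ of order $p$ mapping to the unipotent $u$ and genuinely centralizing a preimage $b$ of the fixed line; only then is $\langle g,b,z\rangle\cong(\bZ/p)^3$. This is not automatic: every preimage of $u$ in $G$ could a priori have order $p^2$, and conjugation by such a preimage could send $b$ to $bz^i$ with $i\neq 0$, in which case no preimage of the fixed line is centralized. The paper closes exactly this hole by first showing $U\leq Z(G)$ (using that $\Qdp$ is perfect, so $G/C_G(U)$, being a cyclic quotient of $\Qdp$, is trivial), then splitting the central extension $1\to U\to T\to SL(2,p)\to 1$ using that $SL(2,p)$ is perfect with trivial Schur multiplier for $p>3$, and finally invoking Winter's description of $\mathrm{Aut}(E)$ to produce an order-$p$ transvection $x$ with $a^x=ab$, $b^x=b$. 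This is where the hypothesis $p>3$ enters beyond the classification (for $p=3$ the extension need not split, as the paper's Example 2.3 exploits), so your proof must make this lifting argument explicit; your list of representation-theoretic facts gestures at perfectness and at the non-lifting of $V$ modulo $p^2$, but never deploys them where they are actually needed.
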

	
\begin{proof}
Let $P\in Syl_p(G).$ Note that $P$ is of rank $2$ and $P/U$ is isomorphic to a Sylow $p$-subgroup 
of $\Qdp$ by the hypothesis, and so $P/U$ is isomorphic to an extra-special $p$-group of order $p^3$ 
and of exponent $p$.  Note that $P$ belongs to one of the $3$-possible family of $p$-groups by 
the classification of $p$-groups of rank $2$ (see \cite[Thm A.1]{DRV}).

(i) $P$ is a metacyclic group. It follows that $P/U$ is also a metacyclic group of order $p^3$. 
However this is not possible as the exponent of $P/U$ is $p$. Thus, $P$ cannot be metacyclic.
			
(ii) $P=C(p,r):=\langle a, b, c \mid a^p = b^p = c^{p^{r-2}} = 1, [a, b] = c^{p^{r-3}}, [a, c] = [b, c] = 1 \rangle$ 
where $r\geq 3$. First suppose that $r>3.$ Since $P/U$ has exponent $p$, we have $[a,b]=c^{p^{r-3}}\in U$. 
It follows that $P/U$ is abelian as $[a,c]=[b,c]=1$. This contradiction shows that $r=3$. In this case, one can 
easily see that $|P|=p^3=|P/U|$, and so $U=1$.
			
(iii) $P= G(p, r; \epsilon):=\langle a, b, c \mid a^p=b^p=c^{p^{r-2}}=[b, c]=1, [a, b^{-1}]=c^{\epsilon p^{r-3}}, [a, c]=b \rangle$ 
where $r\geq 4$ and $\epsilon$ is either $1$ or a quadratic non-residue at modulo $p$.
Since $[a,c]=b$ and $[b,c]=1$, we have $[a,c^p]=[a,c]^p=b^p=1$. Thus, $c^p\in Z(P)$. Moreover, $c^p\in U$ since
$P/U$ is of exponent $p$. It is easy to see that $P/\langle c^p \rangle$ has order $p^3$ by the given presentation. 
This forces that $U=\langle c^p \rangle$, and so $U\leq Z(P)$. Clearly, we have an embedding of $G/C_G(U)$ into 
the $Aut(U)$. Note that $G/C_G(U)\cong (G/U)/(C_G(U)/U)$ and $Aut(U)$ is cyclic, and so $G/C_G(U)$ is isomorphic 
to a cyclic quotient of $\Qdp$. As a consequence, $C_G(U)=G$, that is, $U\leq Z(G)$.
			
Let $A$ be a normal subgroup of $G$ such that $G/A \cong SL(2,p)$ and $A/U\cong \bZ /p \times \bZ /p $. Note that 
$A$ is a maximal subgroup in $P$. Since the Frattini subgroup of $P$ is $\langle b, c^p\rangle$,
either $A=\langle b, c \rangle$, or $A=\langle b, c^p, ac^i\rangle$ for some $i=0, \dots, p-1$. If $A=\langle b, c\rangle $, 
then $A$ is abelian, hence $A\cong C_p \times C_{p^{r-2}}$. If $A=\langle b, c^p, a \rangle$, then $A$ is isomorphic to 
$C(p, r-1)$. If $i\not = 0$, then $(ac^i )^p=c^{ip}$ which gives that $A=\langle b, ac^i\rangle$ is a metacyclic group 
$M(p, r-1)$ isomorphic to a split extension $C_{p^{r-2}}\rtimes C_p$ 
(see \cite[Lemma A.8]{DRV}).

Let $S$ be a subgroup of $G$ such that $A\cap S=U$ and $S/U\cong SL_2(p)$. Note that $S$ acts transitively (by conjugation) on the set of maximal subgroups of $A$ containing $U$. If $A\cong C_p \times C_{p^{r-2}}$ or $A\cong C_{p^{r-2}} \rtimes C_p$, then $U=\Phi(A)$ and the maximal subgroups of $A$ are not isomorphic, so they cannot be permuted transitively by $S$.
Hence these cases are not possible. Now assume that $A=\langle b, c^p, a\rangle \cong C(p, r-1)$. In this case $A$ can be expressed as a central product $E \ast U$ where  $E=\Omega_1(A)$ is the subgroup generated by elements of order $p$, in fact generated by $a$ and $b$, and $U=\langle c^p \rangle$. Note that $E$ is an extra-speical $p$-group of order $p^3$ and $U$ is a cyclic group of order $p^{r-3}$. Since $\Omega _1(A)$ is characteristic in $A$, $E$ is normal in $G$.

We already observed above that $U \leq Z(G)$, hence $U$ is a central subgroup of $S$. Since $S/U \cong SL_2(p)$ is perfect and has a trivial Schur multiplier (see for example \cite[Chapter 5]{Huppert}), we have $S \cong U \times SL_2 (p)$. So $G$ has some subgroup $L\cong SL_2 (p)$.
Consider the semidirect product $LE\leq G$. Let $z$ denote a central element in $E$. Observe that $E/\langle z \rangle $ is a natural module for $L$, and $L$ centralizes $\langle z \rangle$. According to \cite[(3F)]{Winter} there is some $x\in L$ satisfying $a^x=ab$ and $b^x =b$. Then the order of $x$ is $p$ and 
$\langle x, b, z \rangle \cong (\mathbb{Z} /p) ^3 $ which contradicts with the rank assumption. We conclude that the case 
$P= G(p, r; \epsilon)$ with $r\geq 4$ cannot occur.
\end{proof}
 
\begin{proof}[Proof of Theorem \ref{thm:intro1}]
Note that if $G$ $p'$-involves $\Qdp$ then clearly $G$ involves $\Qdp$. Thus, we only show that if $G$ involves 
$\Qdp$ then $G$ $p'$-involves $\Qdp$. Let $G$ be a minimal counterexample to the claim. Suppose that 
$X/Y\cong \Qdp$ for $Y\lhd X<G$. Clearly, the Sylow $p$-subgroups of $X$ are of rank $2$. Then we obtain that 
$X$ $p'$-involves $\Qdp$ by the minimality of $G$, and hence so does $G$. This contradiction shows that if 
$G$ has a section $X/Y$ isomorphic to $\Qdp$, then $X=G$. In particular, there exists $H\lhd G$ such that 
$G/H=\Qdp$.

Now let $S\in Syl_p(H)$. We have $G=HN_G(S)$ by the Frattini argument. It follows that 
$\Qdp\cong G/H\cong N_G(S)/N_H(S)$, and so $N_G(S)=G$ by the argument in the first paragraph. 
In particular, $S$ is normal in $H$, and hence $H$ has a Hall $p'$-subgroup $K$ by the Schur-Zassenhaus 
theorem. Moreover, any two Hall $p'$-subgroup of $H$ are conjugate in $H$ by the conjugacy part of 
Schur-Zassenhaus theorem. Therefore, we obtain $G=HN_G(K)$ by \cite[Thm 5.51]{Machi}. 
This yields that $N_G(K)/N_H(K)\cong \Qdp$ in a similar way, and so $K\lhd G$.

We claim that $K=1$. Assume the contrary. Write $\overline G=G/K$. Note that 
$\overline G/\overline H\cong G/H=\Qdp$ and $\overline P\cong P$. It follows that $\overline G$ 
satisfies the hypotheses, and so $\overline G$ $p'$-involves $\Qdp$ due to the fact that $|\overline G |<|G|$. 
Then we have $\overline X/\overline Y\cong \Qdp$ where $\overline Y$ is a $p'$-subgroup. Hence, we observe 
that $Y$ is a $p'$-group as $K$ is a $p'$-group. However, this is not possible as $X/Y\cong \Qdp$. 
This contradiction shows that $K=1$, that is, $H$ is a $p$-subgroup of $G$. It follows that $G\cong \Qdp$ 
by Lemma \ref{main lemma}. This contradiction completes the proof.
\end{proof}

Theorem \ref{thm:intro1} is not true when $p=3$ as the following example illustrates.

\begin{example}\label{ex:p=3} First note that $SL(2, 3) \cong Q_8 \rtimes C_3$ where $Q_8=\{\pm 1, \pm i, \pm j , \pm k \}$ and $C_3$ permutes the elements $i, j, k$ cyclically. Let $A=\langle a \rangle$ be a cyclic group of order $9$ and $Q$ be the quaternion group of order $8$. 
Constitute $T$ as the semidirect product $Q \rtimes A$ such that  $C_A(Q)=\langle a^3 \rangle$ and $A/\langle a^3 \rangle \cong C_3$ 
acts on $Q$ as above. Then we have $T/ \langle a^3 \rangle \cong SL(2,3)$. Note that $T$ is a non-split extension of $SL(2, 3)$ 
by $C_3$, and there is no such extension for $p>3$.  

Let $E$ be a nonabelian group of order $27$ with exponent $3$. We can take a presentation for $E$ as follows:
$$E=\langle x,y, z \, | \, x^3=y^3=z^3=1, [x,y]=z, [x,z]=[y,z]=1 \rangle.$$ 
We have an embedding of $T/ \langle a^3 \rangle \cong SL(2, 3)$ into $Aut(E)$ which takes an element $$\begin{bmatrix} u & v \\ s & t \end{bmatrix} \in SL(2,3)$$ to the automorphism of $E$ defined by $x\to x^u y^s$ and $y \to x^v y^t$. Let $\varphi $ denote the composition $T \to T/\langle a^3 \rangle \to Aut(E)$. Define $G$ to be the semidirect product of $E$ by $T$  using the homomorphism $\varphi$.
Set $\overline G=G/\langle z^{-1}a^3 \rangle $. We shall show that $\overline G$ 
is the desired counterexample. 

We first study $G$. Let $P=EA$. Then $P$ is generated by $a,x,y$. 
We can assume $a$ maps to the element 
$$\begin{bmatrix} 1 & 1 \\ 0 & 1 \end{bmatrix} \in SL(2, 3),$$
otherwise we can replace $a$ with a conjugate of $a$ in $SL (2, 3)$. 
Hence we have the relations ${}^a x=x$ and ${}^a y=xy$ together with the relations coming from $E$.
Using these relations it is easy to see that the Frattini subgroup 
$\Phi (P)$ of $P$ is the subgroup generated by the elements $ z, x, a^3$. The quotient group 
$P/\Phi ( P) $ is isomorphic to $C_3 \times C_3$ and it is generated by $\overline{a}$ and $\overline{y}$.
If $M$ is a maximal subgroup of $P$, then there are 4 possibilities, namely $M$ is generated by $\Phi (P)$ and 
a single element in $P$ which can be taken as one of the elements  $a, ay, a^2y,$ or $y$. Since $[a, x]=1$, and $[y, x]\neq 1$,
in the last 3 cases the maximal subgroup $M$ is not abelian. Note that in these cases $\overline M=M/\langle z^{-1} a^3 \rangle $ is also nonabelian.
The only case where $M$ is an abelian group is when $M=\langle z,x, a \rangle$. In this case $M$ is isomorphic to $C_3\times C_3 \times C_9$,
and the quotient group $\overline M=M/\langle z^{-1} a^3 \rangle $ is isomorphic to $C_3\times C_9$.
	  
Now let $X$ be an elementary abelian $3$-subgroup of $\overline P= P/\langle z^{-1}a^3 \rangle$ with $\rk(X)=3$. 
Then $X$ is a maximal subgroup of $\overline P$, hence it is of the form $\overline M$ for some maximal subgroup 
$M$ of $P$. As we have shown above there are only 4 possibilities for $\overline M$, and it is either nonabelian or it is isomorphic to $C_3\times C_9$. This contradicts the fact that $X\cong C_3\times C_3 \times C_3$. Hence $\overline P$ has rank equal to $2$, therefore $rk_3(\overline G)=2$. It is clear that $\overline G/\langle \overline z \rangle \cong \Qd (3)$, hence $\overline G$ involves $\Qd (3)$. 

Suppose that $\overline G$ $3'$-involves $\Qd (3)$. Then as $|\overline G | =3|\Qd(3)|$, it follows that 
$\overline G$ has  a subgroup $H$ isomorphic to $\Qd (3)$. Since $\Qd (3)$ does not have a normal subgroup isomorphic to $C_3$, 
we have $H \cap \langle \overline z \rangle=1$, which contradicts the fact that $\overline G$ has $3$-rank two. Hence $\overline G$ is a rank 2 group which involves $\Qd(3)$ but does not $3'$-involve $\Qd(3)$.
\end{example}
 
For $p=2$ there is a similar example. 
 
\begin{example}\label{ex:p=2} The group $\Qd(2)$ is isomorphic to the Symmetric group $S_4$, and its Sylow $2$-subgroup
is isomorphic to the Dihedral group of order $8$. The mod $2$ cohomology of
$S_4$ is given by  $$H^* (S_4; \bF_2) \cong \bF_2 [x_1, y_2, c_3]/(x_1c_3),$$
where the restriction of the 2-dimensional class $y_2$ to elementary abelian subgroups $V_1$ amd $V_2$ are both nonzero (see \cite[Ex 4.4]{AdemSurvey}). This shows that if we take the central extension $$1 \to \bZ/2 \to G \to S_4 \to 1 $$ with extension class $y_2 \in H^2 (S_4; \bF_2)$,
 then $G$ cannot include an elementary abelian subgroup isomorphic to $(\bZ/ 2)^3 $, hence $\rk (G) =2$. It is also easy to see that 
 $G$ does not include $\Qd(2)\cong S_4$ as a subgroup.
 \end{example}
 
%------------------------------------
 
\section{Spherical fibrations and Glauberman's ZJ-theorem}\label{sect:Glauberman}
 
Let $S$ be a $p$-group, and $\cA$ denote the set of all abelian subgroups in $S$. Let $d=\max\{ |A| \, |\, A\in \cA\}$. 
The \emph{Thompson subgroup} $J(S)$ of $S$ is defined as the subgroup generated by all subgroups $A\in \cA$ with $|A|=d$. 
We denote the center of $J(S)$ by $ZJ(S)$. Glauberman's ZJ-theorem \cite[Thm B]{Glauberman1} says that if $p$ is an odd prime and  
$G$ does not involve $\Qd(p)$, then $N_G(ZJ(S))$ controls $G$-fusion in $S$. 

There are three different definitions of Thompson's subgroup. Similar to $J(S)$, one can define $J_e(S)$ to be the subgroup of $S$ generated by all elementary abelian $p$-subgroups of $S$ with maximal order.  Although it is commonly believed that Glauberman's ZJ-theorem \cite[Thm B]{Glauberman1} is valid also for $J_e (S)$, we were not
able to find a published reference for this. Because of this, we will be using an analog of Glauberman's theorem due to Stellmacher.

\begin{theorem}[Stellmacher's ZJ-theorem]\label{thm:SZJ}
Let $p$ be a prime (possibly $p=2$), and $G$ be a finite group with Sylow $p$-subgroup $S$. If $G$ does not involve $\Qdp$, then there exists a characteristic subgroup $W(S)$ of $S$ such that $$\Omega(Z(S))\leq W(S) \leq \Omega(Z(J_e(S)))$$ and $N_G(W(S))$ 
controls $G$-fusion in $S$.
\end{theorem}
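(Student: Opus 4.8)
The plan is to realize $W$ as a \emph{Glauberman functor} and to deduce the theorem from a purely local normality statement by means of Alperin's fusion theorem. First I would pin down the functor itself. Following Stellmacher, one builds $W(S)$ by an iterative procedure inside $\Omega(Z(J_e(S)))$: starting from $\Omega(Z(S))$, one repeatedly enlarges the current subgroup by the fixed points of the failure-of-factorization offenders supported on the elementary abelian subgroups of $S$ of maximal order, halting when the process stabilizes. Because this construction refers only to data intrinsic to $S$ (its maximal elementary abelian subgroups and the center of $J_e(S)$), the resulting $W(S)$ is characteristic in $S$, and the sandwich $\Omega(Z(S)) \le W(S) \le \Omega(Z(J_e(S)))$ holds by design.

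The core is the local statement: if $H$ is $p$-constrained with $O_{p'}(H)=1$, $S \in Syl_p(H)$, and $H$ does not involve $\Qdp$, then $W(S) \trianglelefteq H$. I would prove this by contradiction. Set $V = \Omega(Z(O_p(H)))$ and $\overline H = H/C_H(V)$; since $H$ is $p$-constrained with $O_{p'}(H)=1$, the module $V$ is faithful for $\overline H$ and $O_p(\overline H)=1$. If $W(S)$ fails to be normal in $H$, then $J_e(S)$ does not centralize $V$, so $V$ is a failure-of-factorization module for $\overline H$ with offenders arising from the maximal elementary abelian subgroups of $S$. Invoking the classification of FF-modules, such an offender forces a section $L \cong SL_2(p)$ of $\overline H$ acting on a natural two-dimensional submodule $V_0 \le V$; lifting $V_0$ and $L$ back into $H$ then exhibits a section isomorphic to $(\bZ/p \times \bZ/p) \rtimes SL_2(p) = \Qdp$, contradicting the hypothesis. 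This FF-module analysis is the step I expect to be the main obstacle, since it rests on the full offender classification for the quasisimple sections that can occur and on isolating the natural $SL_2(p)$-module as the unique source of a $\Qdp$-section; it is precisely here that the no-$\Qdp$ hypothesis is consumed.

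For the globalization I would invoke Alperin's fusion theorem in the form that $G$-fusion in $S$ is generated by the normalizers $N_G(Q)$ as $Q$ runs over the centric radical $p$-subgroups. For such a $Q$ one checks that $O_p(N_G(Q))=Q$ while $C_{N_G(Q)}(Q)=C_G(Q)\le Q$, whence $O_{p'}(N_G(Q)) \le C_G(Q)$ is a $p$-group and therefore trivial; thus each $N_G(Q)$ is $p$-constrained with trivial $p'$-core, and it still does not involve $\Qdp$ because every section of $N_G(Q)$ is a section of $G$. The local statement then applies to give $W(N_S(Q)) \trianglelefteq N_G(Q)$, and a standard conjugacy-functor argument, exactly as in Glauberman's original deduction of the ZJ-theorem, transfers this normality so that each fusion generator coming from $N_G(Q)$ is already realized inside $N_G(W(S))$. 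Assembling these contributions over the conjugation family shows that $N_G(W(S))$ controls $G$-fusion in $S$, completing the proof.
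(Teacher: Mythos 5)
Your proposal reconstructs the \emph{internal} proof of Stellmacher's theorem, whereas the paper does not reprove it at all: the paper's argument is purely an assembly of citations. Concretely, the paper (i) quotes parts (a) and (c) of \cite[Thm 9.4.4]{Kurzweil} to get that $W$ is a section conjugacy functor with $\Omega(Z(S))\leq W(S)\leq \Omega(Z(J_e(S)))$, (ii) for odd $p$ uses \cite[Prop 14.7]{Glauberman2} (``$G$ does not involve $\Qdp$ $\Rightarrow$ every section of $G$ is $p$-stable'') together with part (b) of \cite[Thm 9.4.4]{Kurzweil} to get $W(S)\lhd H$ for every $p$-constrained section $H$, and (iii) invokes \cite[Thm 6.6]{Glauberman2} to convert this local normality into control of fusion; for $p=2$ it cites Stellmacher's $\Sigma_4$-free paper directly. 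Your three-step architecture (construct the functor, prove local normality in $p$-constrained local subgroups, globalize) is exactly the architecture of those cited proofs, so at the level of strategy you are on the right track, and your observation that $N_G(Q)$ is $p$-constrained with trivial $p'$-core for $Q$ centric radical is correct.

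The genuine gap is in your core local step. You derive ``$J_e(S)$ does not centralize $V$ $\Rightarrow$ $\Qdp$-section'' by ``invoking the classification of FF-modules,'' but this is not a routine invocation: for $p=2$ the offender classification yields many modules other than the natural $SL_2(2)$-module (natural $\Sigma_n$-modules, for instance), and extracting an $S_4\cong\Qd(2)$ section from an arbitrary offender is precisely the content of Stellmacher's 1996 paper --- it is the theorem, not a lemma feeding into it. For $p=3$ the quadratic-action analysis is likewise delicate (this is why the paper routes the odd case through $p$-stability via \cite[Prop 14.7]{Glauberman2}, whose proof for $p\geq 5$ rests on Thompson's quadratic pairs and for $p=3$ on $\Qd(3)$ itself being the named obstruction). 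You correctly flag this as the main obstacle, but as written it cannot be patched uniformly in all characteristics by a single appeal to FF-module theory. A secondary, smaller issue: your globalization needs to compare $W(N_S(Q))$ with $W(S)$ when $N_S(Q)<S$, and the ``standard conjugacy-functor argument'' doing this is exactly \cite[Thm 6.6]{Glauberman2} (an induction on $|G|$, not a direct consequence of Alperin's theorem); deferring to it is legitimate, but it is where the remaining work lives.
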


Theorem \ref{thm:SZJ} is a natural consequence of Stellmacher's normal ZJ-theorem (see \cite[Thm 9.4.4]{Kurzweil} and \cite{Stellmacher}). 
However it is not easy to see how this implication works since the conditions and conclusions of these two theorems are stated differently. We explain below
how Theorem \ref{thm:SZJ} follows from \cite[Thm 9.4.4]{Kurzweil} for the convenience of the reader. First we need a definition.

\begin{definition}\cite[pg 22]{Glauberman2}\label{def:p-stable}
A group $G$ is called \emph{$p$-stable} if it satisfies the following condition: Whenever $P$ is a $p$-subgroup 
of $G$, $g\in N_G(P)$ and $[P,g,g]=1$ then the coset $gC_G(P)$ lies in $O_p(N_G(P)/C_G(P))$. 
\end{definition}

\begin{proof}[Proof of Theorem \ref{thm:SZJ}]
 By part $(a)$ and $(c)$ of \cite[Theorem 9.4.4]{Kurzweil}, we see that $W$ is a section conjugacy functor (see the definition in \cite[pg 15]{Glauberman2}). First assume that $p$ is odd. Since $G$ does not involve $\Qd(p)$, by \cite[Proposition 14.7]{Glauberman2}, every section of $G$ is $p$-stable (according to Definition \ref{def:p-stable}). It is easy to see that then every section of $G$ is also $p$-stable according to the definition in \cite[pg 255]{Kurzweil}. By part (b) of \cite[Thm 9.4.4]{Kurzweil}, this gives that if $H$ is a section of $G$ such that 
 $C_H(O_p(H))\leq O_p(H)$ and $S$ is a Sylow $p$-subgroup of $H$, then $W(S)$ is a normal subgroup of $H$.  Now Theorem \ref{thm:SZJ} follows from \cite[Thm 6.6]{Glauberman2}.

For $p=2$, Stellmacher's normal ZJ-theorem still holds under the condition that $G$ does not involve $S_4 \cong \Qd(2)$  (see the main theorem and the remark after that in \cite{Stellmacher}).   
Hence the $p=2$ case of Theorem \ref{thm:SZJ} follows from \cite{Stellmacher} and \cite[Thm 6.6]{Glauberman2}.
\end{proof}

A spherical fibration over the classifying space $BG$ is a fibration $\xi : E \to BG$ whose fibre
is homotopy equivalent to a sphere.  A mod-$p$ spherical fibration over $BG$ is a fibration whose fibre
is homotopy equivalent to a $p$-completed sphere $(S^n )_p ^{\wedge}$ for some $n$.  The Euler class of a mod-$p$ 
spherical fibration is a cohomology class $e(\xi)$ in $H^{n+1} (BG; \bF_p)$ defined using the Thom isomorphism (see \cite[Sec 6.6]{OkayYalcin} for details).  
A cohomology class $u \in H^i (BG; \bF_p)$ is called \emph{$p$-effective} if for every elementary abelian $p$-subgroup 
$E \leq G$ with $\rk (E)=\rk_p (G)$, the  image of the restriction map $\res^G _E (u)$  is non-nilpotent in the cohomology ring $H^{*} (E; \bF_p)$.  

One of the important steps in constructing free actions of a finite group $G$ on a product of spheres is to
construct a mod-$p$ spherical fibration over $BG$ with a $p$-effective Euler class. To construct a mod-$p$ spherical fibration with
a $p$-effective Euler class, it is enough to construct a spherical (or mod-$p$ spherical) fibration over the $p$-completed classifying space 
$BG_p ^\wedge$. This is because given a fibration over $BG_p ^\wedge$, we can first apply $p$-completion and then take the pull-back along the completion map $BG\to BG_p ^\wedge$ (see \cite[Sec 6.4]{OkayYalcin}).

\begin{theorem}[Jackson \cite{Jackson}]\label{thm:Jackson} 
Let $p$ be an odd prime and $G$ be a finite group with $\rk _p(G)=2$. If $G$ does not $p'$-involve $\Qdp$,
then there is a mod-$p$ spherical fibration over $BG$ with a $p$-effective Euler class. 
\end{theorem}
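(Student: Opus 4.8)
The plan is to deduce Jackson's theorem, at least for $p>3$, directly from the two results already established in this paper---Theorem \ref{thm:intro1} and Theorem \ref{thm:intro2}---bypassing the homotopy-theoretic and representation-theoretic machinery of \cite{JacksonQuotient} and \cite{Jackson}. The guiding observation is that Theorem \ref{thm:intro2} produces precisely the object we want, a mod-$p$ spherical fibration over $BG$ with a $p$-effective Euler class, but under the hypothesis that $G$ does not \emph{involve} $\Qdp$; meanwhile Jackson's theorem assumes only that $G$ does not \emph{$p'$-involve} $\Qdp$. Theorem \ref{thm:intro1} is exactly the bridge between these two hypotheses in the rank-$2$ case.

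First I would fix a prime $p>3$ and pass to contrapositives in Theorem \ref{thm:intro1}. Since $\rk_p(G)=2$, that theorem asserts that $G$ involves $\Qdp$ if and only if $G$ $p'$-involves $\Qdp$; equivalently, $G$ does not $p'$-involve $\Qdp$ if and only if $G$ does not involve $\Qdp$. Thus the hypothesis of the present theorem is literally the hypothesis of Theorem \ref{thm:intro2}.

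Next I would invoke Theorem \ref{thm:intro2}: since $G$ does not involve $\Qdp$, there is a mod-$p$ spherical fibration over $BG$ whose Euler class is $p$-effective. This completes the argument for all odd primes $p>3$. I note that the rank-$2$ hypothesis is used only in the bridging step, as Theorem \ref{thm:intro2} itself holds for groups of arbitrary rank.

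The main obstacle is the remaining prime $p=3$, which this route cannot reach. Example \ref{ex:p=3} exhibits a rank-$2$ group that involves $\Qd(3)$ yet does not $3'$-involve $\Qd(3)$; for such a group the two hypotheses genuinely differ and the contrapositive step collapses, so Theorem \ref{thm:intro2} is simply inapplicable. I therefore expect that $p=3$ must be left to Jackson's original construction in \cite{Jackson}, which proceeds through \cite{JacksonQuotient} and does not rely on the absence of an involved copy of $\Qd(3)$. In summary, the proposed strategy recovers Jackson's theorem cleanly and conceptually for $p>3$, while $p=3$ remains genuinely exceptional.
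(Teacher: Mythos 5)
Your argument is correct for $p>3$, and it is in fact exactly the alternative route the paper itself advertises in the introduction (``Theorem \ref{thm:intro1} and Theorem \ref{thm:intro2} together give a different proof for Jackson's theorem for $p>3$''). It differs genuinely from the proof the paper actually attributes to this statement, namely Jackson's original two-step argument: first produce a $p$-effective character of a Sylow $p$-subgroup $S$ that respects fusion in $G$ \cite[Thm 45]{Jackson}, then convert such a character into a vector bundle over $BG_p^\wedge$ with $p$-effective Euler class via a mod-$p$ homotopy decomposition of $BG$ and the vanishing of higher limits \cite[Thm 16]{Jackson}. Your route replaces all of that machinery with Stellmacher's ZJ-theorem and a single Borel construction $EN\times_N S(V)\to BN$ over the normalizer $N=N_G(W(S))$, which is considerably more direct and, via Theorem \ref{thm:intro2}, works for groups of arbitrary rank under the ``does not involve'' hypothesis. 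What Jackson's approach buys in exchange is the prime $p=3$: the statement as written covers all odd primes, and for $p=3$ your bridging step genuinely fails --- Example \ref{ex:p=3} exhibits a rank-two group that involves $\Qd(3)$ but does not $3'$-involve it, so the hypothesis of Theorem \ref{thm:intro2} is strictly stronger there and cannot be reached from the hypothesis of the present theorem. You correctly flag this, but be explicit that your argument therefore proves only the $p>3$ portion of the stated theorem, with $p=3$ still resting entirely on Jackson's original construction rather than on anything established in this paper.
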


Jackson proves Theorem \ref{thm:Jackson} in two steps. Let $S$ be a Sylow $p$-subgroup of $G$ and $\chi$ a character of $S$.
We say $\chi$ \emph{respects fusion in $G$} if for every $x\in S$ and  $g\in G$ satisfying $gxg^{-1}\in S$, the equality $\chi (x)=\chi (gxg^{-1})$ holds.
A character of a group $H$ is \emph{$p$-effective} if for every elementary abelian $p$-subgroup $E\leq H$ with $\rk (E)=\rk_p(H)$, 
the restriction of $\chi$ to $E$ does not include the trivial character as a summand, i.e., if $[\chi |_E , 1_E]=0$. Jackson \cite[Thm 45]{Jackson} proves that if $G$ is a finite group satisfying the conditions of Theorem \ref{thm:Jackson}, then there is a $p$-effective character of $S$ that respects fusion in $G$. 

Jackson \cite[Thm 16]{Jackson}  also shows that if $S$ has a $p$-effective character respecting fusion in $G$, then there is a complex vector bundle $E \to BG_p ^\wedge$ with a $p$-effective Euler class. The sphere bundle of this vector bundle gives the desired spherical fibration over $BG_p ^\wedge$. The construction of the vector bundle with a $p$-effective Euler class uses a mod-$p$ homotopy decomposition of $BG$, and they are constructed by proving the vanishing of certain higher limits as obstruction classes.
 
We show below that there is a more direct construction of a spherical fibration with a $p$-effective Euler class using a characteristic subgroup of a Sylow $p$-subgroup in $G$ as in Theorem \ref{thm:SZJ}.

\begin{proposition}\label{pro:main2}
Let $p$ be a prime, and $G$ a finite group with a Sylow $p$-subgroup $S$. Suppose that there is
a nontrivial characteristic subgroup $Z$ of $S$ such that $Z \leq \Omega(ZJ_e(S))$ and that the normalizer  $N_G(Z)$ controls 
$G$-fusion in $S$. Then there is a mod-$p$ spherical fibration over $BG$ with a $p$-effective Euler class.	
\end{proposition}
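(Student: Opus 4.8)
The plan is to replace $G$ by $N:=N_G(Z)$ at the level of $p$-completed classifying spaces, and then to produce the fibration over $BN$ from a suitable complex representation. Since $Z$ is characteristic in $S$ we have $S\leq N$, so $S$ is a Sylow $p$-subgroup of $N$ and $Z\lhd N$. The hypothesis that $N$ controls $G$-fusion in $S$ means that the fusion systems of $G$ and of $N$ on $S$ coincide; by the Cartan--Eilenberg stable element formula the restriction $H^*(BG;\bF_p)\to H^*(BN;\bF_p)$ is then an isomorphism, so $BN\to BG$ is an $\bF_p$-homology equivalence and induces a homotopy equivalence $BN_p^\wedge\simeq BG_p^\wedge$. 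By the reduction recalled before Theorem~\ref{thm:Jackson} it therefore suffices to build a (mod-$p$) spherical fibration over $BN$ whose Euler class restricts non-nilpotently to every elementary abelian subgroup of $N$ of rank $\rk_p(N)=\rk_p(G)$; pulling its $p$-completion back along $BG\to BG_p^\wedge$ will then give the required fibration over $BG$.

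The key algebraic observation I would establish first is that every maximal-rank elementary abelian subgroup contains $Z$. Indeed, if $E\leq S$ is elementary abelian of maximal order $p^{\rk_p(S)}$, then $E\leq J_e(S)$ by the definition of $J_e(S)$, while $Z\leq \Omega(Z(J_e(S)))$ centralizes $J_e(S)$; hence $\langle Z,E\rangle$ is elementary abelian of order at least $|E|$, and maximality forces $Z\leq E$. Since $Z\lhd N$ and every maximal-rank elementary abelian subgroup of $N$ is $N$-conjugate into $S$, it follows that every such subgroup of $N$ contains $Z$.

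With this in hand I would construct the representation. Choose a nontrivial linear character $\lambda$ of $Z$ and set $V:=\ind_Z^N\lambda$. Because $Z$ is normal in $N$, Mackey's formula gives $\res_Z V\cong\bigoplus_{g\in N/Z}\lambda^g$, and each conjugate $\lambda^g$ is again a nontrivial character of $Z$; hence $V^Z=0$, and consequently $V^E=0$ for every subgroup $E$ with $Z\leq E$, in particular for every maximal-rank elementary abelian $E\leq N$. Now form the Borel construction $EN\times_N S(V)\to BN$, a linear sphere bundle and hence a spherical fibration over $BN$, whose Euler class $e=e(EN\times_N V)\in H^*(BN;\bF_p)$ is the top Chern class of $V$. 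For a maximal-rank elementary abelian $E\leq N$ we have $V|_E\cong\bigoplus_i\chi_i$ with all $\chi_i$ nontrivial, so $\res^N_E(e)=\prod_i c_1(\chi_i)$ is a product of nonzero elements of the polynomial subalgebra of $H^*(E;\bF_p)$ and is therefore non-nilpotent. Thus $e$ is $p$-effective over $BN$.

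Finally I would transport the fibration: $p$-completing $EN\times_N S(V)\to BN$ and composing with the equivalence $BG_p^\wedge\simeq BN_p^\wedge$ yields a mod-$p$ spherical fibration over $BG_p^\wedge$, and its pullback along $BG\to BG_p^\wedge$ is the desired fibration over $BG$. That its Euler class is $p$-effective over $G$ follows because the isomorphism $H^*(BG;\bF_p)\cong H^*(BN;\bF_p)$ is compatible with restriction to subgroups of $S$, and every elementary abelian subgroup of $G$ of rank $\rk_p(G)$ is $G$-conjugate into $S$. I expect the main obstacle to be the algebraic step of the second paragraph together with the non-nilpotency computation of the third: once one knows that $Z$ sits inside every maximal-rank elementary abelian subgroup, the induced character $\ind_Z^N\lambda$ is forced to restrict without trivial constituents, after which the remaining homotopy-theoretic steps (fusion control giving the completion equivalence, and the Euler class of a Borel construction) are standard.
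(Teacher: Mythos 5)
Your proposal is correct and follows essentially the same route as the paper: reduce to $N=N_G(Z)$ via Cartan--Eilenberg and $p$-completion, observe that $Z$ lies in every maximal-rank elementary abelian subgroup because $Z\leq \Omega(Z(J_e(S)))$, and take the Borel construction on the unit sphere of $\ind_Z^N\lambda$ for a nontrivial linear character $\lambda$ (the paper verifies $C_V(Z)=0$ by Frobenius reciprocity where you use Mackey's formula, an immaterial difference). The only detail you omit is the paper's remark that one may replace $V$ by $V\oplus V$ to ensure $N$ acts trivially on the mod-$p$ homology of the fiber before invoking the Bousfield--Kan mod-$R$ fiber lemma, which is harmless here since $V$ is complex.
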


\begin{proof} Let $N=N_G(Z)$. By the Cartan-Eilenberg theorem, the restriction map $H^*(G; \bF_p)\to H^*(N, \bF_p)$ is an isomorphism, hence the $p$-completion of the classifying space $BG_p ^{\wedge}$ is homotopy equivalent to $BN_p^\wedge$. Therefore it is enough to construct the desired mod-$p$ spherical fibration over $BN_p ^{\wedge}$. We show below that there is a representation $V$ of $N$ that is $p$-effective. Then, the Borel construction $$S(V) \to EN\times _N S(V) \to BN$$ for the unit sphere $S(V)$ gives a spherical fibration with a $p$-effective Euler class. We can also assume $N$ acts trivially on $H_i (S(V); \bF_p)$ by replacing $V$ with $V\oplus V$ if necessary. Taking the $p$-completion of this fibration gives the desired mod-$p$ spherical fibration. Note that to conclude that fibers are homotopy equivalent to $S(V)_p^\wedge$, we use the Mod-$R$ fiber lemma by Bousfield and Kan \cite[Thm 5.1, Chp VI]{BousfieldKan}.
 
Now we show how to construct $V$. If $E$ is an elementary abelian subgroup of $S$ of maximum rank, then $E \leq J_e(S)$, hence  $E$ and $Z$ commute. Since the rank of $E$ is the maximum possible rank in $G$, and $EZ$ is an elementary abelian $p$-group, we must have $Z \leq E$. 

Let $\rho : Z \to \mathbb{C} ^\times$ be a nontrivial one-dimensional representation of $Z$. Consider the induced representation $V=\ind _Z ^N \rho$. By Frobenius reciprocity, we have
$$[\res ^N _Z \ind _Z ^N \rho, 1]=[\rho, \res ^N _Z \ind ^N _Z 1].$$
As $Z$ is normal in $N$, $$ \res^N _Z \ind _Z ^N 1=[N:Z] 1.$$ So, $C_V(Z)=0$, and therefore $C_V (E)=0$ whenever $Z\leq E$. 
This shows that $V$ is a $p$-effective character of $N$, hence completes the proof.
\end{proof}

Now we are ready to prove Theorem \ref{thm:intro2}.

\begin{proof}[Proof of Theorem \ref{thm:intro2}] Let $G$ be a finite group that does not involve $\Qd(p)$. Then, by Theorem \ref{thm:SZJ}, there exists a subgroup $W(S)$ of $S$ satisfying the conditions of Proposition \ref{pro:main2}. Applying Proposition \ref{pro:main2} we obtain the desired fibration.
\end{proof}

\begin{remark} The argument above also shows that when $G$ is a finite group which does not involve $\Qdp$, then there is a $p$-effective representation
$\chi$ of a Sylow $p$-subgroup $S$ which respects fusion in $G$. Note that since in this case $N=N_G(W(S))$ controls $G$-fusion in $S$, it is enough to construct this representation so that it respects fusion in $N$. To achieve this we can take $\chi =\res^N_S V$ where $V$ is the representation of $N$ constructed above in the proof of Theorem \ref{pro:main2}.    
\end{remark}

In \cite[Thm 1.2]{OkayYalcin} it is shown that there is no mod-$p$ spherical fibration over $B\Qdp$ with $p$-effective Euler class when $p$ is an odd prime. 
This result can be extended to give a converse to Jackson's theorem.

\begin{theorem}[Jackson \cite{Jackson}, Okay-Yal\c c\i n \cite{OkayYalcin}]\label{thm:Iff}
Let $p$ be an odd prime and $G$ be a finite group with $\rk _p(G)=2$.  
Then $G$ does not $p'$-involve $\Qdp$ if and only if there is a mod-$p$ spherical fibration over 
$BG$ with effective Euler class.	
\end{theorem}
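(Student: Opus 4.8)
The plan is to dispatch the two implications separately. The forward implication---that if $G$ does not $p'$-involve $\Qdp$ then $BG$ carries a mod-$p$ spherical fibration with $p$-effective Euler class---is precisely Jackson's Theorem~\ref{thm:Jackson}, so nothing new is required there. For the converse I would argue by contrapositive: assuming that $G$ $p'$-involves $\Qdp$, I would manufacture a mod-$p$ spherical fibration with $p$-effective Euler class over $B\Qdp$ and invoke the non-existence result \cite[Thm 1.2]{OkayYalcin} for a contradiction.

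So suppose $K\lhd H\le G$ with $|K|$ coprime to $p$ and $H/K\cong\Qdp$, and suppose for contradiction that $\xi$ is a mod-$p$ spherical fibration over $BG$ with $p$-effective Euler class $e(\xi)$. First I would restrict $\xi$ along the map $BH\to BG$ to obtain a mod-$p$ spherical fibration $\xi|_H$ over $BH$ whose Euler class is $\res^G_H e(\xi)$. Next I would pass to the quotient $\Qdp=H/K$: since $K$ is a normal $p'$-subgroup of $H$, the projection $H\to H/K$ induces a mod-$p$ homology isomorphism $BH\to B\Qdp$, hence a homotopy equivalence $BH_p^\wedge\simeq (B\Qdp)_p^\wedge$ on $p$-completions. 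Transporting the ($p$-completed) fibration across this equivalence and then pulling back along the completion map $B\Qdp\to(B\Qdp)_p^\wedge$---the same reduction used in Proposition~\ref{pro:main2}---yields a mod-$p$ spherical fibration $\bar\xi$ over $B\Qdp$ whose Euler class $\bar e$ corresponds to $\res^G_H e(\xi)$ under the inflation isomorphism $H^*(\Qdp;\bF_p)\cong H^*(H;\bF_p)$.

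The crux is to check that $\bar e$ is again $p$-effective, and this is where the hypothesis $\rk_p(G)=2$ does the real work. I would use that $\rk_p(H)=\rk_p(H/K)=\rk_p(\Qdp)=2=\rk_p(G)$, since $K$ is a $p'$-group. Consequently every rank-$2$ elementary abelian subgroup $E\le H$ is simultaneously of maximal rank in $G$, so $\res^G_E e(\xi)$ is non-nilpotent by the $p$-effectiveness of $e(\xi)$; and every maximal-rank (rank $2$) elementary abelian $\bar E\le\Qdp$ lifts, via $E\cap K=1$ together with Schur--Zassenhaus, to such an $E\le H$ mapping isomorphically onto $\bar E$. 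Chasing $\res^{\Qdp}_{\bar E}\bar e$ through the inflation isomorphism identifies it with $\res^G_E e(\xi)$, which is non-nilpotent; hence $\bar e$ is $p$-effective over $\Qdp$, and \cite[Thm 1.2]{OkayYalcin} gives the desired contradiction.

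I expect the main obstacle to be the second step---making precise that a mod-$p$ spherical fibration genuinely descends from $BH$ to $B\Qdp$, and not merely at the level of Euler classes. This rests on the principle (already exploited for Proposition~\ref{pro:main2}) that such fibrations may be built and compared on $p$-completed classifying spaces, combined with the equivalence $BH_p^\wedge\simeq(B\Qdp)_p^\wedge$ valid because $K$ is a normal $p'$-subgroup. One must take care that the Euler class, living in $H^*(-;\bF_p)$ and hence unchanged under $p$-completion, is carried correctly across these equivalences so that the rank bookkeeping above can be applied verbatim.
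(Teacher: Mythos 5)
Your proposal is correct and follows essentially the same route as the paper's proof: the forward direction is quoted from Jackson's theorem, and the converse restricts the fibration to $BH$, uses $\rk_p(H)=2$ to preserve $p$-effectiveness, transports across the equivalence $BH_p^\wedge\simeq B\Qdp{}_p^\wedge$ coming from the normal $p'$-subgroup $K$, and contradicts \cite[Thm 1.2]{OkayYalcin}. The extra details you supply (lifting maximal elementary abelians via Schur--Zassenhaus, pulling back along $B\Qdp\to(B\Qdp)_p^\wedge$) are only more explicit versions of steps the paper leaves implicit.
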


\begin{proof} One direction of this is Theorem \ref{thm:Jackson} stated above.
For the other direction, let $K\lhd H \leq G$ such that $H/K\cong \Qdp$ and $K$ has order coprime to $p$.
If there is a mod-$p$ spherical fibration $\xi_G$ over $BG$, we can pull it back to a mod-$p$ spherical fibration $\xi_H$ 
over $BH$ via the map $BH\to BG$ induced by inclusion. Since $\rk_p(H)=2$, if $\xi_G$ has a $p$-effective Euler class, then
$\xi_H$ also has a $p$-effective Euler class. 

By taking the $p$-completion of $\xi_H$, we get a mod-$p$ spherical fibration 
over $BH_p ^\wedge$ that has $p$-effective Euler class. Since $K$ is a subgroup with coprime order to $p$, the quotient map $H \to H/K$ induces a 
homotopy equivalence $BH_p^{\wedge} \cong B(H/K)_p^{\wedge}$. Hence we obtain a mod-$p$
spherical fibration over $B\Qdp _p ^\wedge$ with a $p$-effective Euler class.
But there is no such fibration by  \cite[Thm 1.2]{OkayYalcin}.
\end{proof}
 
For $p=2$, the condition that $G$ does not involve $\Qd(2) \cong S_4$ is not a necessary condition for the existence of a mod-$2$ 
spherical fibration over $BG$. The group $G=S_4$ acts on a 2-sphere $X=S^2$ with rank one isotropy, hence the Borel construction for this action
$X \to EG\times _G X \to BG$ gives a spherical fibration with a $p$-effective Euler class. It is interesting to ask whether the condition ``$G$ does not involve $\Qdp$" is necessary for the existence of a mod-$p$ spherical fibration over $BG$ with a $p$-effective Euler class when $G$ is group of arbitrary rank and $p$ is an odd prime.

\end{document}